\documentclass[letterpaper,11pt,oneside]{amsart}
\usepackage{amsmath}
\usepackage{amssymb}
\usepackage{latexsym}
\usepackage{amsfonts}
\usepackage[english]{babel}
\usepackage{multicol}
\usepackage{oldgerm}
\usepackage{enumerate}
\usepackage{hyperref}

\newtheorem{theorem}{Theorem}
\newtheorem{lemma}{Lemma}
\newtheorem{remark}{Remark}

\begin{document}

\title{On Brocard's problem with Padovan and Perrin numbers}

\author{Eric F. Bravo}
\address{Departamento de Matem\'aticas\\ Universidad del Cauca\\ Calle 5 No 4--70\\Popay\'an, Colombia.}
\email{fbravo@unicauca.edu.co}

\date{}

\begin{abstract}

The Padovan sequence $\{P_{m}\}_{m\ge 0}$ is a ternary recurrence sequence with companion polynomial $X^{3}-X-1$ and initial conditions $P_{0}=P_{1}=P_{2}=1$. The Perrin sequence $\{R_{m}\}_{m\ge 0}$ is defined by the same companion polynomial as the Padovan sequence, but has initial values $R_{0}=3$, $R_{1}=0$, and $R_{2}=2$. We solve the Brocard-Ramanujan equation $n!+1=x^{2}$, where $n!$ is the factorial of $n$ and $x$ is a Padovan number or a Perrin number. In both cases, we prove that $(n,x)=(4,5)$ is the only solution.

\medskip

\medskip

\noindent\textbf{Keywords}\, Brocard's problem, Padovan number, Perrin number, $p$-adic order.

\noindent\textbf{Mathematics Subject Classification.}\, 11B39, 11D88.

\end{abstract}

\maketitle


\section{Introduction}

\noindent Henri Brocard \cite{8} first posed the following problem in 1876: 
\begin{center}
``Find the integer values of $n$ for which $1\cdot 2\cdots n +1$ is a perfect square.''
\end{center}
Without knowing anything about Brocard's question, Srinivasa Ramanujan \cite{10} tackled the same problem in 1913:
\begin{center}
``The number $n!+1$ is a perfect square for the values $4,5,7$ of $n$. Find other values.''
\end{center}
We refer to the Diophantine equation
\begin{equation}\label{E3}
n!+1=x^{2}
\end{equation}
as the \textit{Brocard–Ramanujan equation}. The pairs $(n,x)$ that satisfy equation \eqref{E3} were named \textit{Brown numbers} by Pickover \cite{11}, after K. Brown brought the problem to his attention. To date, only three Brown numbers are known: $(4,5)$, $(5,11)$, and $(7,71)$. In 1980, Erd\H{o}s \cite{13} conjectured that these are the only solutions to equation \eqref{E3}. In 1993, Overholt \cite{14} proved that if the ABC conjecture is true, then there are only a finite number of solutions to the equation \eqref{E3}. Computational searches have not found any further solutions for $n\le 10^{9}$ (see Berndt and Galway \cite{9}). However, the Brocard-Ramanujan equation is one of the most famous Diophantine equations involving factorials that remains unsolved.

Among the numerous variants of Brocard's problem is the task of solving the equation \eqref{E3} when $x$ is a term in a linear recurrence sequence. The first to consider this variant was Marques \cite{4} in 2012, who proved that $(n,x)=(4,5)$ is the only solution to equation \eqref{E3} when $x=F_{m}$ is a Fibonacci number. To achieve this, Marques \cite{4} uses the Primitive Divisor Theorem of Carmichael \cite[Theorem XXI]{17} and a factorization of $F_{m}\pm 1$ as the product of a Fibonacci number and a Lucas number.

The first to solve the equation \eqref{E3} when $x$ is a term in a recurring ternary linear sequence were Facó and Marques \cite{7} in 2016, who proved that the Brocard-Ramanujan equation has no solutions when $x=T_{m}$ is a Tribonacci number.

To outline the approach of Facó and Marques \cite{7} for solving $n!+1=T_{m}^{2}$, recall that the $p$-adic valuation (or $p$-adic order) of an integer $x$ is defined as the highest power of the prime $p$ that divides $x$. We denote this by $\nu_{p}(x)$. Their idea basically consisted of showing that $\nu_{2}(T_{m}^{2}-1)$ grows only logarithmically in $m$. In relation to $n!=T_{m}^{2}-1$, this growth is much slower than the linear growth of $\nu_{2}(n!)$, which imposes an upper bound on $n$ for any solution.

Although the idea of Fac\'o and Marques \cite{7} idea was sound, it was based on an incorrect formula for $\nu_{2}(T_{m}+1)$ when $m\equiv 61\pmod{64}$. Recently, the reviewer of the article of Young \cite{16} pointed this out. Young \cite{1} used the argument of Fac\'o and Marques \cite{7}, but instead of using $\nu_{2}(T_{m}^{2}-1)$, he used $\nu_{47}(T_{m}^{2}-1)$ to correctly establish that there are no solutions to equation \eqref{E3} when $x=T_{m}$. Young \cite{16} also proved that there are no solutions to equation \eqref{E3} when $x$ is a Tetranacci number, this time using the $2$-adic order of shifted Tetranacci numbers.

Brocard's problem has also been solved using the idea of Fac\'o and Marques \cite{7} when $x$ is a Narayana number by means of the $3$-adic order of shifted Narayana numbers (see Ismail et al. \cite{3}).

In 2021, Bravo et al. \cite{2} proved, in general, that equation \eqref{E3} has a finite number of solutions when $x$ is a term of a linear recurrence sequence $\{u_{m}\}_{m\ge 0}$ that satisfies that $m=\mathcal{O}(\log u_{m})$, $\nu_{p}(u_{m}+1)=\mathcal{O}(m^{C_{1}})$, and $\nu_{p}(u_{m}-1)=\mathcal{O}(m^{C_{2}})$ for a prime $p$ and constants $C_{1},C_{2}$ such that $C=\max \{C_{1},C_{2}\}<1$, where $\mathcal{O}$ denotes the Landau symbol. They applied their result with $p=3$ to the Tripell sequence, proving that $(n,x)=(4,5)$ is the only Brown number for which $x$ is a Tripell number.

We consider Brocard's problem when $x$ is a Padovan or Perrin number. The Padovan numbers $\{P_{m}\}_{m\ge 0}$ are defined by the Fibonacci-like recurrence relation
$$
P_{m+3}=P_{m+1}+P_{m}\quad \text{for}\quad m\ge 0,
$$
with initial conditions $P_{0}=P_{1}=P_{2}=1$. The first few are
$$
1,1,1,2,2,3,4,5,7,9,12,16,21,28,37,49,65,86,\ldots.
$$ 
The Perrin numbers $\{R_{m}\}_{m\ge 0}$ are defined by the same recurrence relation as Padovan numbers, but with initial values $R_{0}=3$, $R_{1}=0$, and $R_{2}=2$. The first few are
$$
3,0,2,3,2,5,5,7,10,12,17,22,29,39,51,68,90,\ldots.
$$

\section{Results}

\noindent Our main findings are as follows.
\begin{theorem}\label{T1}
The pair $(n,x)=(4,5)$ is the only Brown number for which $x$ is a Padovan number.
\end{theorem}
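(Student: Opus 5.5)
We follow the strategy of Fac\'o and Marques: find a prime $p$ for which $\nu_{p}(P_{m}^{2}-1)=\nu_{p}(P_{m}-1)+\nu_{p}(P_{m}+1)$ grows only logarithmically in $m$, while $\nu_{p}(n!)$ grows linearly in $n$. Since $n!+1=P_{m}^{2}$ forces $n$ and $m$ to be related by $n\log n\approx 2m\log\alpha$, where $\alpha\approx 1.3247$ is the plastic number, these two growth rates are incompatible for large $n$, which yields an explicit bound on $n$.

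First we choose $p$. For $p=2$ we compute the period of $P_{m}\bmod 2^{k}$, locate the residues $m$ with $P_{m}\equiv\pm1\pmod{2^{k}}$ for small $k$, and check whether they stabilise into finitely many arithmetic progressions $m\equiv r\pmod{2^{j}}$. The goal is a formula of the shape $\nu_{2}(P_{m}\mp1)=\nu_{2}(m-r)+c_{r}$ on each such progression, with $\nu_{2}(P_{m}\mp 1)$ bounded by an absolute constant on all the remaining classes. If $p=2$ produces a residue class where no clean formula holds (the failure mode in the Tribonacci case), we switch to another small prime such as $3$, whose Padovan period is $13$. The method only needs one prime with a clean valuation formula.

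To prove each formula, we use the standard lifting argument. Write $m=r+2^{j}t$ and express $P_{r+2^{j}t}$ through the matrix $M^{2^{j}t}$, where $M$ is the companion matrix. Show that $M^{2^{j}}=I+2^{e}A$ with $A\not\equiv 0 \pmod 2$. By binomial expansion, $M^{2^{j}t}\equiv I+t\,2^{e}A \pmod{2^{e+\nu_{2}(t)+1}}$, and induction on $\nu_{2}(t)$ then gives $\nu_{2}(P_{m}\mp1)=\nu_{2}(t)+\text{const}$. This step needs care, because a wrong constant in one residue class is exactly how the earlier Tribonacci argument went astray. It is the main obstacle: we must both find a prime and modulus where the residues stabilise, and verify the lifting in every class.

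With the formula in hand, we get $\nu_{p}(P_{m}^{2}-1)\le \log m/\log p + C$. On the other side, Legendre's formula gives $\nu_{p}(n!)\ge n/(p-1)-\log n/\log p-1$. Combining these with the growth estimate $P_{m}<\alpha^{m-1}$, which gives $\log(n!)<2m\log\alpha$ and hence $m>n\log n/3$, yields the inequality $n/(p-1)\ll \log(n\log n)$. This bounds $n$ by a small explicit constant, say $n<100$. Finally, for $n$ below this bound we check directly which $n!+1$ are squares. Only $n=4,5,7$ give squares, namely $25$, $121$ and $5041$. Among $5$, $11$ and $71$, only $5=P_{7}$ is a Padovan number, since the sequence runs $\dots,9,12,\dots$ and $65,86,\dots$, skipping $11$ and $71$. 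This proves the theorem.
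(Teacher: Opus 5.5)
Your proposal is a plan rather than a proof, and it breaks down at the step you yourself call the main obstacle. You never establish a valuation formula. Your fallback for a bad residue class, replacing $2$ by a single other prime such as $3$, is unsupported: nothing guarantees a prime for which every $p$-adic zero of $P_m\mp1$ is a rational integer.

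For the Padovan sequence, $p=2$ does fail, since the $2$-adic formula for $\nu_2(P_m+1)$ is only available for $m\not\equiv 50\pmod{112}$. The prime $p=3$ has the same defect. On the class $m\equiv 1\pmod{39}$ no formula $\nu_3(m-1)+c$ can hold, because $\nu_3(40-1)=\nu_3(-38-1)=1$ while $\nu_3(P_{40}-1)=\nu_3(55404)=6$ and $\nu_3(P_{-38}-1)=\nu_3(81)=4$. This forces $u\mapsto P_{1+39u}-1$ to have a second zero in $\mathbb{Z}_3$ besides $u=0$, and it is not a rational integer.

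The missing idea is that you do not need one prime for all $m$. Split into residue classes and, on the bad class, use a prime whose valuations are merely bounded. The paper uses $p=7$ on $m\equiv 50\pmod{112}$. There a finite check modulo $7$ (period $48$) and modulo $49$ (period $336$) gives $\nu_7(P_m+1)=0$ and $\nu_7(P_m-1)\le 1$, hence $n/6-\log_7(n+1)\le 1$ and $n\le 14$. The $2$-adic formulas handle every other class and give $n\le 38$.

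Two further steps are wrong as written.

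\emph{The growth estimate points the wrong way.} $P_m<\alpha^{m-1}$ gives a lower bound for $m$. To convert $\nu_p(n!)\le \log_p m+C$ into a bound on $n$ you need an upper bound for $m$ in terms of $n$. That comes from the lower bound $P_m\ge\alpha^{m-2}$ together with $n!+1\le n^n$, which yields $m\le 2n\log_2 n+2$.

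\emph{Your lifting set-up does not fit this sequence.} The companion matrix has order $7$ modulo $2$, so $M^{2^j}\not\equiv I\pmod 2$ for every $j$. The progressions must therefore be taken modulo $7\cdot 2^j$; the paper's moduli are $7,14,28,56,112$. The binomial truncation also needs $e\ge 2$ when $p=2$. Finally, each formula must be centred at a genuine integer zero, not at the residue $r$. Here these zeros often lie at negative indices: $P_{-6}=P_{-9}=-1$ and $P_{-13}=P_{-26}=1$, giving factors such as $m+6$ and $(m-2)(m+26)$.
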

\begin{theorem}\label{T2}
There is only one solution to the Brocard-Ramanujan equation when $x$ is a Perrin number, namely $(n,x)=(4,5)$.
\end{theorem}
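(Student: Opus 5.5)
Both theorems will be proved by the Fac\'o--Marques strategy. We write $n!=(x-1)(x+1)$ with $x=P_m$ (respectively $x=R_m$) and choose a prime $p$ for which $\nu_p(P_m\pm1)$ (respectively $\nu_p(R_m\pm1)$) grows at most logarithmically in $m$. We then compare this with $\nu_p(n!)=\frac{n-s_p(n)}{p-1}>\frac{n}{p-1}-\log_p n-1$, which grows linearly in $n$. The link between $n$ and $m$ comes from the growth of the sequences. Let $\alpha\approx1.3247$ be the plastic number, the real root of $X^3-X-1$. Then $\alpha^{m-3}\le P_m\le\alpha^{m-1}$, and $R_m=\alpha^m+\beta^m+\bar\beta^m$ with $|\beta|<1$. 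Since $n!+1=x^2$, Stirling gives $m\log\alpha\approx 2\log x\approx n\log n$, so $m=O(n\log n)$. Hence $\nu_p(x^2-1)=O(\log n)$, while $\nu_p(n!)\gg n/(p-1)$. This forces $n$ below an explicit bound $N_0$, and a direct computer check of $n!+1$ being a Padovan (respectively Perrin) square for $n\le N_0$ leaves only $(4,5)$, since $5=P_7=R_5=R_6$, while $11$ and $71$ are neither Padovan nor Perrin numbers.

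The core step is to compute $\nu_p(P_m+1)$ and $\nu_p(P_m-1)$ exactly, and likewise for $R_m$, for a suitable prime $p$. I would first try $p=2$ and fall back to $p=3$ or another small prime if the $2$-adic behaviour is irregular, as happened for Tribonacci. The method has four parts.
\begin{enumerate}[(i)]
\item Find the period of $P_m\pm1$ modulo $p$. Modulo $2$ the Padovan sequence has period $7$: $1,1,1,0,0,1,0$.
\item Identify the residue classes of $m$ in which $p\mid P_m\pm1$.
\item In each such class, write $m=m_0+k\cdot\pi(p^j)$, where $\pi(p^j)$ is the period modulo $p^j$. For Padovan, $\pi(2^j)=7\cdot2^{j-1}$, by a standard lifting argument applied to the companion matrix $M$, using that $M^{7}\equiv I\pmod 2$ but $M^7\not\equiv I\pmod 4$.
\item Prove by induction on $j$ a closed formula of the shape $\nu_p(P_m\pm1)=\nu_p(m-m_0)+c$ valid in that class. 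Where a finite set of exceptional classes makes $P_m\pm1$ vanish identically (e.g. $P_0-1=0$), handle it separately: those $m$ are small or the valuation reduces to a shifted copy of the formula.
\end{enumerate}
The induction in (iv) uses the identity $M^{a\cdot 2^{j}}=(M^{a\cdot2^{j-1}})^2$. If $M^{a2^{j-1}}=I+2^{j}B$ with $B\not\equiv0\pmod 2$, then $M^{a2^j}=I+2^{j+1}B'$ with $B'\equiv B\pmod 2$, which propagates the exact power of $2$. The conclusion is $\nu_2(P_m\pm1)\le \log_2 m+C$ for an explicit constant $C$.

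The same analysis is carried out for $R_m=\operatorname{tr}(M^m)$. This is actually cleaner, since the trace formula and $R_{2m}=R_m^2-2R_{-m}$-type identities give direct control. Combining, $\nu_2(n!)=\nu_2(x-1)+\nu_2(x+1)\le 2\log_2 m+2C$. Inserting $m<c\,n\log n$ and $\nu_2(n!)\ge n-\log_2 n-1$ yields $n\le N_0$ with $N_0$ probably around a few dozen. A slightly careful version of the inequality then suffices for a finite check.

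The main obstacle is step (iv): getting a correct, complete $p$-adic formula for the shifted sequences in every residue class. The Tribonacci precedent shows that one misidentified class ruins the argument. Some class may exhibit non-uniform lifting, where $\nu_p$ jumps because $m_0$ is itself a special lift. In that case I would either choose a different prime $p$ whose period structure is regular, or verify the few anomalous classes by an explicit computation modulo a high power of $p$ followed by the same inductive lifting.
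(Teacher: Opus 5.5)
Your outline fails at step (iv), and it fails in the Perrin case you expected to be cleaner. For $m\equiv 5,10\pmod{14}$ there is no formula $\nu_2(R_m-1)=\nu_2(m-m_0)+c$ with $m_0\in\mathbb{Z}$. On these classes the $2$-adic interpolation of $m\mapsto R_m-1$ vanishes at a $2$-adic integer $\mu$ that is not a rational integer. It is certainly not a nonnegative one, since $R_m\ge 2$ for $m\ge 2$. So there is no integer anchor from which to run your induction.

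Concretely, for $m=5,19,33,47,61,75$ one finds $\nu_2(R_m-1)=2,4,2,3,2,7$. This is consistent with $\nu_2(R_m-1)=\nu_2(m-\mu)+1$ for some $\mu\equiv 11\pmod{128}$. Your second fallback (compute modulo a higher power of $2$ and lift) only splits the class into finer subclasses on which the valuation keeps growing. An explicit bound $\nu_2(R_m-1)\le\log_2 m+C$ would require control of the runs of zeros in the binary expansion of $\mu$, which no finite computation provides. This is why the available $2$-adic formula for $R_m-1$ (Theorem~\ref{TV3}) omits exactly these two classes. Consequently your inequality $\nu_2(n!)\le 2\log_2 m+2C$ is simply not available there.

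The missing idea is to switch primes only on the exceptional classes, using a prime for which the valuations there are bounded by absolute constants. The paper takes $p=7$. The Perrin sequence has period $48$ modulo $7$ and $336$ modulo $49$. A finite check then gives $\nu_7(R_m-1)\le 1$ and $\nu_7(R_m+1)\le 2$ for all $m\equiv 5,10\pmod{14}$; the value $2$ occurs only for $m\equiv 47\pmod{336}$, which is settled modulo $343$. Hence $n/6-\log_7(n+1)\le\nu_7(n!)\le 3$, so $n\le 28$, with no need to relate $m$ to $n$. On the remaining classes every $2$-adic formula has an integer shift, and your size comparison gives $n\le 32$.

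Your suggested global switch to $p=3$ or another prime carries no such guarantee, since non-integral $p$-adic zeros can occur in some residue class for any $p$. What makes the argument close is that the $2$-adically bad classes happen to be $7$-adically harmless. The rest of your plan agrees with the paper: the lower bound for $\nu_p(n!)$, $m=O(n\log n)$, and the final check that only $5=R_5=R_6$ among $5,11,71$ is a Perrin number.
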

We use the method of Facó and Marques \cite{7} to prove both theorems. In the case where $x$ is a Padovan number and $m\not \equiv 50\pmod{112}$, we use the $2$-adic valuation of shifted Padovan numbers, whose formulas were recently discovered by Batte et al. \cite[Proposition 2.12]{6}, and Bravo and Irmak \cite[Theorem 2.1]{5}. If $m\equiv 50\pmod{112}$, we use the $7$-adic valuation of the shifted Padovan numbers, whose formulas we derive here (see Lemmas \ref{L2} and \ref{L1}). The growth of the $p$-adic valuation of $n!$ is linear regardless of the value of $p$, so in any case we obtain an upper bound for $n$. This upper bound is so small that there is no need to use a computational routine to find the solutions to equation \eqref{E3}; it is sufficient to check whether $x=5$, $11$, or $71$ are Padovan numbers, based on an earlier result by Gupta \cite{12}.

The case where $x=R_{m}$ is a Perrin number is similar to the case where $x$ is a Padovan number, with the difference that there are two special classes, namely $m\equiv 5,10\pmod{14}$.

\section{Auxiliary Results}

\noindent We begin with a complete description of the $2$-adic valuation of the shifted Padovan sequence $\{P_{m}-1\}_{m\ge 0}$ given by Bravo and Irmak \cite[Theorem 2.1]{5}.
\begin{theorem}\label{TV1}
For $m\ge 0$, we have
$$
\nu_{2}\left(P_{m}-1\right)= 
\begin{cases}
\infty, & \mbox{if } m=0,1,2;\\
0, & \mbox{if } m\equiv 3,4,6\pmod{7};\\
\nu_{2}(m+2)+1, & \mbox{if }  m\equiv 5\pmod{7};\\
\nu_{2}((m-1)(m+13))+1, & \mbox{if }  m\equiv 1\pmod{7};\\
\nu_{2}(m)+1, & \mbox{if } m\equiv 0\pmod{14};\\
\nu_{2}(m+7)+1, & \mbox{if }  m\equiv 7\pmod{14};\\
\nu_{2}(m+5)+2, & \mbox{if }  m\equiv 9\pmod{14};\\
\nu_{2}((m-2)(m+26))+3, & \mbox{if }  m\equiv 2\pmod{28};\\
\nu_{2}(m+12)+4, & \mbox{if }  m\equiv 16\pmod{28}.
\end{cases}
$$
\end{theorem}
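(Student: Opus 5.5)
A natural approach is to prove the formula by induction on the $2$-adic valuation of the index, using the residue structure of $P_m$ modulo powers of $2$. First, the Padovan sequence modulo $2$ has period $7$:
$$1,1,1,0,0,1,0,\ldots$$
This already gives $\nu_2(P_m-1)=0$ exactly when $m\equiv 3,4,6\pmod 7$. For the remaining classes $m\equiv 0,1,2,5\pmod 7$ we have $P_m$ odd, and we split further according to the residue of $m$ modulo $14$ or $28$, as in the statement.

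\textbf{Main tool: doubling identities.} The key ingredient is a family of identities expressing $P_{m+k}$ in terms of $P_{m}, P_{m+1}, P_{m+2}$ with coefficients that are again Padovan numbers, namely
$$P_{m+k}=P_{k-2}P_{m+2}+P_{k-3}P_{m+1}+P_{k-4}P_{m}$$
(with the appropriate shifts, or in Binet/matrix form). Writing the companion matrix $M$ of $X^3-X-1$, one has $M^7\equiv I\pmod 2$. I would then compute the $2$-adic behaviour of $M^{7\cdot 2^{j}}$. The aim is to show that
$$M^{7\cdot 2^{j}}=I+2^{j+c}A_j$$
for a constant $c$ and an integer matrix $A_j$ that is nonsingular modulo $2$ (in the relevant entries). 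This is the standard ``lifting the exponent'' phenomenon for matrix powers: if $M^{N}=I+2^{s}A$ with $s\ge 2$, then squaring gives
$$M^{2N}=I+2^{s+1}\bigl(A+2^{s-1}A^{2}\bigr),$$
so the valuation increases by exactly one.

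\textbf{Reduction to the zero index.} Given a class, say $m\equiv 5\pmod 7$, I would write $m=-2+r$, where $r=m+2\equiv 0\pmod 7$. Padovan numbers extend to negative indices, and $P_{-2}=0$, $P_{-1}=1$, $P_{-3}=1$. Expanding $P_{m}=P_{-2+r}$ via $M^{r}$ applied to the initial vector at index $-2$ shows that $P_m-1$ equals a linear combination of the entries of $M^{r}-I$, up to terms that vanish because $P_{-2+j}$ equals $1$ at the special points. Then $\nu_2(P_m-1)=\nu_2(r)+1$, obtained by writing $r=7\cdot 2^{j}\cdot(\text{odd})$ and using that odd powers of $I+2^{s}A$ preserve the valuation $s$.

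The remaining classes are handled the same way, by anchoring at indices where $P=1$:
- for $m\equiv 0\pmod{14}$, anchor at $0$;
- for $m\equiv 7\pmod{14}$, anchor at $-7$ (note $P_{-7}=1$);
- for $m\equiv 9\pmod{14}$, anchor at $-5$;
- for $m\equiv 2\pmod{28}$, anchor at $2$ and at $-26$;
- for $m\equiv 16\pmod{28}$, anchor at $-12$;
- for $m\equiv 1\pmod 7$, anchor at $1$ and at $-13$.

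These anchors all satisfy $P=1$, which is why the shifts $m-1$, $m+13$, $m+26$, and so on appear in the formula.

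\textbf{Main obstacle.} The difficulty is the classes with a product, $\nu_2((m-1)(m+13))$ and $\nu_2((m-2)(m+26))$. Here two anchor points lie in the same residue class modulo $14$ or $28$, so a single linear expansion leaves a degenerate leading coefficient, and one must go to second order in the expansion of $M^{r}=I+rB+\binom r2B^2+\cdots$, with $B$ a $2$-adic logarithm. Concretely, I would use the $2$-adic analytic function $f(t)=P_{a+14t}-1$ on $\mathbb Z_2$. One shows it has exactly two simple zeros, at the two anchors, and that $f(t)=u\cdot(t-t_1)(t-t_2)$ with $u$ of fixed valuation, via Strassmann's theorem and a Newton-polygon check of the first few Mahler/Taylor coefficients modulo $2^{5}$ or so. This gives $\nu_2(f(t))=\nu_2(t-t_1)+\nu_2(t-t_2)+\nu_2(u)$, which yields the stated formula after translating $t$ back to $m$. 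The single-anchor cases are the same argument with one zero.

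Finally, I would verify the constants ($+1,+2,+3,+4$) by computing a few small values, for example $P_5-1$, $P_{16}-1=64$, and $P_{9}-1$.
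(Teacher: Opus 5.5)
The paper does not prove this statement; it quotes it from Bravo and Irmak \cite[Theorem 2.1]{5}. Your sketch therefore has to stand on its own, and as written it has two genuine gaps.

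The first gap is the claim that the remaining single-anchor classes are ``handled the same way'' as $m\equiv 5\pmod 7$. Work in $\mathbb{Z}_2[\alpha]=\mathbb{Z}_2[X]/(X^3-X-1)$. This is a discrete valuation ring with uniformizer $2$, because $X^3+X+1$ is irreducible over $\mathbb{F}_2$. Let $L$ be the $\mathbb{Z}_2$-linear map with $L(\alpha^k)=P_k$ for all $k\in\mathbb{Z}$.

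Since $\alpha^{14}=1+4(2+4\alpha+3\alpha^2)$, your squaring lemma gives, whenever $14\mid r$, $\alpha^r-1=2^{\nu_2(r)+1}\varepsilon$ with $\varepsilon\equiv\alpha^2\pmod 2$. Hence for an anchor $a$ with $P_a=1$ you get $\nu_2(P_{a+r}-1)=\nu_2(r)+1+\nu_2(L(\alpha^a\varepsilon))$, where $L(\alpha^a\varepsilon)\equiv P_{a+2}\pmod 2$. So preservation of the valuation under odd powers gives the exact value $\nu_2(r)+1$ only when $P_{a+2}$ is odd.

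That holds for $a=0,-2,-7$. But for $a=-5$ one has $P_{-3}=0$, and for $a=-12$ one has $P_{-10}=2$. These are exactly the classes $m\equiv 9\pmod{14}$ and $m\equiv 16\pmod{28}$, whose constants are $+2$ and $+4$. There you need $\varepsilon$ modulo $4$, resp.\ $16$, uniformly in $\nu_2(r)$ and in the odd part of $r$, and $\varepsilon$ does change with both. Your sketch does not provide this.

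Also, $P_{-1}=0$, $P_{-2}=1$, $P_{-3}=0$. With the values you list, the anchor $-2$ would not even satisfy $P=1$.

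The second gap is that the analytic step is only asserted. Your last paragraph does fall back on it for every class, but that step is where the whole proof lives. Writing $\Lambda_N=\log\alpha^N$, one has $P_{a+Nt}-1=\sum_{k\ge 1}a_k t^k$ with $a_k=L(\alpha^a\Lambda_N^k)/k!$. You must compute $\Lambda_N$ to sufficient $2$-adic precision to show, class by class, that the Weierstrass degree equals the number of anchors, and to read off the valuation of the unit factor.

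Strassmann's bound together with the crude estimate $\nu_2(a_k)\ge 2k-\nu_2(k!)$ (for $N=14$) does not do this. For $a=-5$ one finds $\nu_2(a_1)=3$, while the estimate only gives $\nu_2(a_2)\ge 3$, so it cannot exclude Weierstrass degree $2$.

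Your zero count is also wrong for the function you wrote down. For $f(t)=P_{2+14t}-1$ there are three zeros, $t=0,-1,-2$ (that is, $m=2,-12,-26$). Indeed the two mod-$28$ formulas merge into $\nu_2(P_{2+14t}-1)=5+\nu_2(t(t+1)(t+2))$. If you pass to $N=28$ instead, the unit factor for $a=2$ has valuation $7$. The coefficients are then needed modulo at least $2^8$, not ``$2^5$ or so''.

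The strategy is viable, but until these coefficient computations are carried out for each class, the formula has not been proved.
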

We continue with the $2$-adic order of the shifted Padovan sequence $\{P_{m}+1\}_{m\ge 0}$ except when $m\equiv 50\pmod{112}$. This result is due to Batte, Bravo, and Luca \cite[Proposition 2.12]{6}.
\begin{theorem}\label{P1}
For all $m\ge 0$ such that $m\not\equiv 50\pmod{112}$, we have
$$
\nu_{2}(P_{m}+1)= 
\begin{cases}
0, & \mbox{if } m\equiv 3,4,6\pmod{7};\\
\nu_{2}(m+9)+1, & \mbox{if }  m\equiv 5\pmod{7};\\
1, & \mbox{if }  m\equiv 0,1,2,7,9\pmod{14};\\
3, & \mbox{if }  m\equiv 8\pmod{28};\\
5, & \mbox{if }  m\equiv 22\pmod{56};\\
\nu_{2}(m+6)+4, & \mbox{if }  m\equiv 106\pmod{112}.
\end{cases}
$$
\end{theorem}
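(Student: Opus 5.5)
This result is quoted from Batte, Bravo and Luca \cite[Proposition 2.12]{6}. To prove it directly, I would use periodicity modulo powers of $2$ together with a $2$-adic interpolation of the Padovan sequence along arithmetic progressions.

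\emph{Step 1: the period modulo $2$.} Reduce $P_m$ modulo $2$ and modulo $4$. The polynomial $X^3-X-1$ is irreducible over $\mathbb{F}_2$, and its roots have order $7$ in $\mathbb{F}_8^{*}$. Hence $P_m \bmod 2$ has period $7$. Checking $P_3=2$, $P_4=2$, $P_6=4$ shows that $P_m+1$ is odd exactly when $m\equiv 3,4,6\pmod 7$, which gives the first line. Modulo $2^k$ the period becomes $7\cdot 2^{k-1}$. So the classes $0,1,2,7,9\pmod{14}$ (value $1$), $8\pmod{28}$ (value $3$) and $22\pmod{56}$ (value $5$) can be settled by a finite computation of $P_m \bmod 2^{6}$ over one full period.

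\emph{Step 2: the unbounded classes.} These are $m\equiv 5\pmod 7$ and $m\equiv 106\pmod{112}$. For a residue class $m=r+dt$, write the Binet form $P_{r+dt}=\sum_i c_i\alpha_i^{r}(\alpha_i^{d})^{t}$. Work in the splitting field over $\mathbb{Q}_2$, which is unramified of degree $3$ since the polynomial is irreducible mod $2$. With $d$ a multiple of $7$ we have $\alpha_i^{d}\equiv 1$ modulo the maximal ideal, so
\[
\alpha_i^{dt}=\exp\bigl(t\log \alpha_i^{d}\bigr)
\]
converges. Then $f(t)=P_{r+dt}+1$ extends to a $2$-adic analytic function of $t\in\mathbb{Z}_2$. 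Equivalently, and more elementarily, one can expand $\alpha^{dt}=(1+\beta)^t=\sum_j\binom{t}{j}\beta^j$. This gives
\[
P_{r+dt}+1=\sum_{j\ge 0}\binom{t}{j}\Delta^{j}(0),
\]
with forward differences $\Delta^{j}(0)$ computable from a few explicit values, and with $\nu_2(\Delta^j(0))$ growing at least linearly in $j$.

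\emph{Step 3: reading off the valuation.} For the formula $\nu_2(m+9)+1$ with $m\equiv 5\pmod 7$, I would pick the progression $m=-9+14s$ (or treat $m\equiv 5,12\pmod{14}$ separately). I would then show that $f(s)=P_{-9+14s}+1$ has a simple $2$-adic zero at $s=0$, using $P_{-9}=-1$ computed from the backward recurrence. Next I would verify that the first difference has valuation exactly $\nu_2(14)+1$ minus the shift, and that the higher terms are dominated. Then $\nu_2(f(s))=\nu_2(s)+c$, and translating back gives $\nu_2(m+9)+1$.

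The same scheme handles $m\equiv 106\pmod{112}$. There I would use the base point $m=-6$; the backward recurrence gives $P_{-6}=-1$, so the formula is $\nu_2(m+6)+4$. The constant $4$ should come from the valuation of the linear coefficient along the step-$112$ progression.

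\emph{Main obstacle.} The delicate part is controlling the higher-order terms $\binom{t}{j}\Delta^j$ for $j\ge 2$. Their valuations must strictly exceed that of the linear term uniformly in $t$. Simply exceeding it at a single $t$ is not enough, since that would cause failures like the one in the Tribonacci case. It is also why the class $50\pmod{112}$ is excluded: there the linear coefficient presumably degenerates, or the zero is not rational. I would verify the required bounds by computing $\Delta^j(0)$ modulo a sufficiently high power of $2$, say $2^{12}$, for $j\le 4$. For larger $j$ I would use the general estimate $\nu_2(\Delta^j(0))\ge j(\nu_2(\beta))$.
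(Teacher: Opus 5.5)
The paper gives no argument for this statement; it imports it from Batte, Bravo and Luca \cite[Proposition 2.12]{6}. Your $2$-adic interpolation is therefore an independent route. The citation buys brevity. Your approach makes the result self-contained and reduces it to a few residue computations, which is worth having given the paper's own account of a wrong published formula in the Tribonacci case. It also explains why $50\pmod{112}$ is excluded.

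The plan is sound, and cheaper than you fear. For $u_s=P_{r+ds}+1$ and integers $s\ge 0$, Newton's formula $u_s=\sum_{j\le s}\binom{s}{j}\Delta^j u_0$ is a finite identity. For $j\ge 1$ we have $\Delta^j u_0=\sum_i c_i\alpha_i^{r}(\alpha_i^{d}-1)^j$, where $c_i\alpha_i^r$ is integral at $2$ because the $\alpha_i$ are units and the discriminant $-23$ is odd. Hence $\nu_2(\Delta^j u_0)\ge j\,\nu_2(\alpha^d-1)$.

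What you leave implicit in ``uniformly in $t$'' is the bound $\nu_2\binom{s}{j}\ge \nu_2(s)-\nu_2(j)$, which follows from $\binom{s}{j}=\frac{s}{j}\binom{s-1}{j-1}$. Squaring $\alpha^7\equiv 1\pmod 2$ gives $\nu_2(\alpha^{14}-1)\ge 2$ and $\nu_2(\alpha^{112}-1)\ge 5$. The domination conditions $2j-\nu_2(j)>2$ and $5j-\nu_2(j)>8$ then hold for every $j\ge 2$. So you need not compute $\Delta^j$ modulo $2^{12}$. The only inputs are the linear coefficients $P_5+1=4$ and $P_{106}+1\equiv 256\pmod{1024}$. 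These give $\nu_2(P_{-9+14s}+1)=\nu_2(s)+2=\nu_2(m+9)+1$ and $\nu_2(P_{-6+112s}+1)=\nu_2(s)+8=\nu_2(m+6)+4$, as stated.

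Two small repairs are needed. First, the $\exp/\log$ formulation requires $\alpha^d\equiv 1\pmod 4$, i.e.\ $14\mid d$. Here $\nu_2(\log\alpha^7)=1$, so $\exp(t\log\alpha^7)$ diverges for odd $t$; your binomial version is unaffected. Second, the progression $-9+14s$ only covers $m\equiv 5\pmod{14}$. For $m\equiv 12\pmod{14}$, $m+9$ is odd and the claim $\nu_2(P_m+1)=1$ is a check modulo $4$ (e.g.\ $P_{12}+1=22$).

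Your guess about $50\pmod{112}$ can be made precise. Along $m=-6+56t$ one has $\nu_2(\Delta^1)=\nu_2(P_{50}+1)=9$, but $\nu_2(\Delta^2)=\nu_2(P_{106}-2P_{50}-1)=8$. So the quadratic term beats the linear one, which forces a second $2$-adic zero at some odd $t$. That is why the step must be $112$, and why odd $t$, i.e.\ $m\equiv 50\pmod{112}$, is excluded.
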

In the following two results, we determine the $7$-adic valuation of the shifted Padovan sequences $\{P_{m}\pm 1\}_{m\ge 0}$ for the case where $m\equiv 50\pmod{112}$.
\begin{lemma}\label{L2}
If $m\equiv 50\pmod{112}$, then $\nu_{7}(P_{m}+1)=0$.
\end{lemma}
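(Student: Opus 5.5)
The plan is to reduce the statement to a finite check using the periodicity of the Padovan sequence modulo $7$. Since $\{P_m\}_{m\ge 0}$ satisfies a linear recurrence with integer coefficients and the companion polynomial $X^3-X-1$ has constant term $-1$, which is a unit modulo $7$, the sequence $\{P_m \bmod 7\}$ is purely periodic. I will compute its period $\pi(7)$ by iterating the recurrence $P_{m+3}=P_{m+1}+P_m$ modulo $7$ from $(1,1,1)$ until this initial triple reappears. I expect to find $\pi(7)=48$; this should be confirmed by the explicit computation, namely by checking that $(P_{48},P_{49},P_{50})\equiv(1,1,1)\pmod 7$ and that the triple does not reappear at any proper divisor of $48$.

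Granting $\pi(7)=48$, the hypothesis $m\equiv 50\pmod{112}$ only fixes $m$ modulo $\gcd(112,48)=16$, namely $m\equiv 2\pmod{16}$. Hence $m \bmod 48\in\{2,18,34\}$, and it suffices to show that $P_r+1\not\equiv 0\pmod 7$ for $r=2,18,34$. Reducing the recurrence modulo $7$ gives
\[
P_2\equiv 1,\qquad P_{18}=114\equiv 2,\qquad P_{34}\equiv 4 \pmod 7,
\]
so $P_m+1\equiv 2,3,5\pmod 7$ respectively. In no case is $P_m+1$ divisible by $7$, which gives $\nu_7(P_m+1)=0$.

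The only real work is the residue computation itself: determining the period modulo $7$ and the residues at $r=2,18,34$. I would record the residues $P_0,\dots,P_{50}\pmod 7$ in a short table so that both the period and these three values can be read off directly. The table also serves as a safeguard. If the period turned out not to be $48$ but some other $\pi$, the same argument applies verbatim with the residues of $m$ modulo $\gcd(112,\pi)$ and the corresponding classes modulo $\pi$. Alternatively, one could work modulo $\operatorname{lcm}(112,\pi)$ by listing all classes compatible with $m\equiv 50\pmod{112}$. The conclusion then follows from a finite check in either case.
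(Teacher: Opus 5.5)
Your proposal is correct and follows the paper's proof: you show that $48$ is a period of $\{P_m \bmod 7\}$, observe that $m\equiv 50\pmod{112}$ forces $m\equiv 2,18,34\pmod{48}$, and use $P_2\equiv 1$, $P_{18}\equiv 2$, $P_{34}\equiv 4\pmod 7$. The only difference is how the period is found: the paper uses the factorization $X^3-X-1\equiv (X-5)(X^2+5X+3)$ over $\mathbb{F}_7$ and the root orders $6$ and $48$, while you iterate the recurrence directly (and indeed $(P_{48},P_{49},P_{50})\equiv(1,1,1)\pmod 7$); only the fact that $48$ is a period is needed, not its minimality.
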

\begin{proof}
The companion polynomial of $\{P_{m}\}_{m\ge 0}$ is $P(X)=X^{3}-X-1$. This polynomial factors into $(X-5)(X^{2}+5X+3)$ in the finite field $\mathbb{F}_{7}$, since $P(5)=119\equiv 0\pmod{7}$ and $X^{2}+5X+3$ has no real roots in $\mathbb{F}_{7}$ because its discriminant, namely $5^{2}-4(1)(3)=13$, is not a perfect square modulo $7$. The order of $5$ in $\mathbb{F}_{7}^{\ast}$ is $6$. The complex conjugate roots $\beta$ and $\gamma$ of $X^{2}+5X+3$ are primitive elements of $\mathbb{F}^{\ast}_{49}$, that is, they have order $48$. Therefore, the period of the Padovan sequence modulo $7$ is the least common multiple of $6$, $48$, and $48$, namely $48$. 

Now, let $k$ be a non-negative integer such that $m=112k+50$. Then, $m\equiv 16k+2\pmod{48}$. If $k\equiv 0,1,2\pmod{3}$, then $m\equiv 2,18,34\pmod{48}$, respectively. Since $P_{2}=1\equiv 1\pmod{7}$, $P_{18}=114\equiv 2\pmod{7}$, and $P_{34}=10252\equiv 4\pmod{7}$, we conclude that $P_{m}\pmod{7}\in \{1,2,4\}$. Therefore, $P_{m}+1\pmod{7}\in \{2,3,5\}$ and, consequently, $7$ does not divide $P_{m}+1$.
\end{proof}
\begin{lemma}\label{L1}
Suppose that $m\equiv 50\pmod{112}$. Then
$$
\nu_{7}(P_{m}-1)=
\begin{cases}
0, & \mbox{if }  m\equiv 162,274\pmod{336};\\
1, & \mbox{if }  m\equiv 50\pmod{336}.\\
\end{cases}
$$
\end{lemma}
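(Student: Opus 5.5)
The plan is to reduce everything to periodicity of the Padovan sequence modulo $7$ and modulo $49$, reusing the setup from the proof of Lemma~\ref{L2}.

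\emph{The residues $162$ and $274 \pmod{336}$.} In the proof of Lemma~\ref{L2} we showed that the period of $\{P_m\}_{m\ge0}$ modulo $7$ is $48$. We also showed that for $m=112k+50$ one has $m\equiv 16k+2\pmod{48}$. Since $336=\operatorname{lcm}(112,48)=3\cdot112$, the three classes $m\equiv 50,162,274\pmod{336}$ correspond to $k\equiv 0,1,2\pmod 3$. These in turn correspond to $m\equiv 2,18,34\pmod{48}$. From the values already computed there, $P_m\equiv 1,2,4\pmod 7$ respectively. So for $m\equiv162,274\pmod{336}$ we get $P_m-1\equiv 1,3\pmod 7$, hence $\nu_7(P_m-1)=0$. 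For $m\equiv 50\pmod{336}$ we get $7\mid P_m-1$, so $\nu_7(P_m-1)\ge 1$.

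\emph{The residue $50\pmod{336}$.} To show the valuation is exactly $1$, we pass to the sequence modulo $49$. We use the standard fact that if a linear recurrence with unit constant coefficient (here the coefficient of $P_m$ in $P_{m+3}=P_{m+1}+P_m$ is $1$) has period $T$ modulo a prime $p$, then its period modulo $p^2$ divides $pT$. To prove this, write the companion matrix $A$ with $A^{T}=I+pB$. The binomial theorem then gives $A^{pT}=(I+pB)^p\equiv I\pmod{p^2}$. Hence the period of $\{P_m\}$ modulo $49$ divides $7\cdot48=336$.

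Consequently, for every $m\equiv 50\pmod{336}$ we have $P_m\equiv P_{50}\pmod{49}$. It then suffices to verify directly that $P_{50}-1$ is divisible by $7$ but not by $49$. We do this by iterating the recurrence modulo $49$ up to index $50$, or equivalently by computing $P_{50}$ exactly and reducing. This single finite check gives $\nu_7(P_m-1)=1$ for the whole class.

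The main obstacle is this last step: a concrete computation of $P_{50}\bmod 49$, which must land in $\{1+7j : 1\le j\le 6\}$. The only other nonroutine ingredient is the lifting of the period from $7$ to $49$, and the matrix argument above handles it cleanly. If the direct check had failed, with $49\mid P_{50}-1$, the statement would be false and a finer analysis would be needed. The statement as given predicts that it does not fail.
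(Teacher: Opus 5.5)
Your proposal follows the paper's proof: the period $48$ modulo $7$ (with $P_{18}\equiv 2$ and $P_{34}\equiv 4\pmod 7$) settles $m\equiv 162,274\pmod{336}$, and the period $336$ modulo $49$ reduces $m\equiv 50\pmod{336}$ to the one check you left open, which does succeed because $P_{50}=922111\equiv 29\pmod{49}$, i.e.\ $P_m-1\equiv 28\pmod{49}$. Your lifting argument usefully justifies the period modulo $49$, which the paper only asserts, but the step $A^{48}=I+7B$ needs $A^{48}\equiv I\pmod 7$ as a matrix, not just the sequence period. Here that holds, either because the roots $5,\beta,\gamma$ in Lemma~\ref{L2} are distinct with orders $6,48,48$, or because the initial vectors $(P_j,P_{j+1},P_{j+2})$, $j=0,1,2$, have determinant $-1$. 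Your general fact as stated is false: $7F_m$ has period $1$ modulo $7$ but period $16$ modulo $49$.
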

\begin{proof}
Let $m\equiv 162,274\pmod{336}$. Then there exist non-negative integers $k$ and $\ell$ such that $m=336k+162$ or $m=336\ell +274$. Therefore, $m\equiv 18,34\pmod{48}$. Since the period of the Padovan sequence modulo $7$ is $48$, $P_{18}\equiv 2\pmod{7}$, and $P_{34}\equiv 4\pmod{7}$, we conclude that $P_{m}\pmod{7}\in \{2,4\}$. Therefore, $P_{m}-1\pmod{7}\in \{1,3\}$ and, consequently, $7$ does not divide $P_{m}-1$.

Now, let $m\equiv 50\pmod{336}$. The period of the Padovan sequence modulo $49$ is $336$. Therefore, 
$$
P_{m}\equiv P_{50}\pmod{49}.
$$
Since $P_{50}=922111\equiv 29\pmod{49}$, it follows that $P_{m}-1\equiv 28\pmod{49}$ for any $m\equiv 50\pmod{336}$. This proves that $7$ divides $P_{m}-1$ but $7^{2}=49$ does not.
\end{proof}
Below we present the $2$-adic valuation of the shifted Perrin sequence $\{R_m+1\}_{m\ge 0}$, which was fully characterized by Bravo and Irmak \cite[Theorem 2.2]{5}.
\begin{theorem}\label{TV2}
For $m\ge 0$, we have
$$
\nu_{2}\left(R_{m}+1\right)= 
\begin{cases}
0, & \mbox{if } m\equiv 1,2,4\pmod{7};\\
1, & \mbox{if }  m\equiv 5\pmod{7};\\
\nu_{2}(m+7)+2, & \mbox{if }  m\equiv 0\pmod{7};\\
\nu_{2}(m+11)+1, & \mbox{if } m\equiv 3\pmod{7};\\
\nu_{2}((m+1)(m+29))+1, & \mbox{if }  m\equiv 6\pmod{7}.
\end{cases}
$$
\end{theorem}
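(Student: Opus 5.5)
The plan is to prove Theorem~\ref{TV2} the way Bravo and Irmak treat such valuations: first fix the behaviour of $\{R_m\}$ modulo $2$, then get modulo-$2^k$ identities by working in a suitable quadratic extension, as in Theorem~\ref{TV1}.

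\textbf{Step 1 (the period mod $2$ and the trivial classes).} Modulo $2$ the companion polynomial $X^3-X-1$ is $X^3+X+1$, which is irreducible over $\mathbb{F}_2$. Its roots have order $7$ in $\mathbb{F}_8^\ast$, so $\{R_m\}$ has period $7$ modulo $2$. The residues of $R_0,\dots,R_6$ are $3,0,2,3,2,5,5$, i.e. $1,0,0,1,0,1,1$. Hence $R_m+1$ is odd exactly when $m\equiv 1,2,4\pmod 7$, which gives the classes with valuation $0$.

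\textbf{Step 2 (closed forms modulo high powers of $2$).} For the remaining classes $m\equiv 0,3,5,6\pmod 7$ we need an identity describing how $R_{m+7\cdot 2^t}$ relates to $R_m$ modulo $2^{t+c}$. I would use the Binet form $R_m=\alpha^m+\beta^m+\gamma^m$ and the fact that $\alpha^7\equiv 1$ modulo $2$ in the ring of integers of the splitting field. Then $\alpha^{7\cdot 2^t}=1+2^{t+1}\theta_t$ for some algebraic integer $\theta_t$, with $\theta_t$ controlled. From this one gets congruences of the form
$$R_{m+7\cdot 2^t}\equiv R_m+2^{t+1}S_m \pmod{2^{t+2}},$$
where $S_m$ is an explicit linear recurrence value.

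Concretely, I would apply the standard technique to the subsequence $m=7j+r$ and write $R_{7j+r}+1$ as a polynomial-like function $f_r(j)$. Its $2$-adic behaviour is then read off from its values at small $j$ by $2$-adic interpolation: the map $j\mapsto R_{7j+r}$ extends to a $2$-adically analytic function, because $\alpha^7=1+2\eta$ with $\eta$ integral, so $(\alpha^7)^j=\exp(j\log(1+2\eta))$.

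\textbf{Step 3 (matching each class).} In each class, analyticity lets us expand $f_r(j)=\sum_k c_k\binom{j}{k}2^{k}$ with integral coefficients, and the valuation is governed by the first few terms. For $m\equiv 5$ we get $v_2(f_5(0))=1$, a constant. For $m\equiv 0$ and $m\equiv 3$ the function has a simple zero at a $2$-adic point, namely $m=-7$ and $m=-11$ respectively (as integers, $R_{-7}=-1$ and $R_{-11}=-1$, which one checks via the backward recurrence). This yields $\nu_2(m+7)+2$ and $\nu_2(m+11)+1$. For $m\equiv 6$ there are two zeros, at $m=-1$ and $m=-29$, which gives $\nu_2((m+1)(m+29))+1$.

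So the key checks are: $R_{-1}=R_{-29}=-1$ (indeed $R_{-1}=R_2-R_0=-1$); a simple-zero condition, i.e. the derivative of the interpolating function has the stated valuation; and the fact that the two zeros in class $6$ are $2$-adically distinct but close, with $-1\equiv -29 \pmod 4$ and differing at $2^2\cdot 7$.

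\textbf{Main obstacle.} The delicate case is $m\equiv 6\pmod 7$, where two zeros lie in the same class. There one must verify that the interpolating function factors as $(m+1)(m+29)$ times a $2$-adic unit times $2$. I would handle this by computing the Mahler or Taylor coefficients of $f_6$ modulo $16$ or $32$ from the values $R_6,R_{13},R_{20},\dots$ plus $1$, and then applying a Weierstrass-preparation/Strassmann-type argument to show there are exactly two zeros, located at $-1$ and $-29$. The same coefficient computation, done with less effort, settles the single-zero classes.
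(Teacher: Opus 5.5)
The paper gives no proof of this statement. It is imported from Bravo and Irmak \cite[Theorem~2.2]{5}, so your sketch has to stand on its own. Step~1 is correct. Steps~2--3, however, contain a false step, and they postpone everything that actually needs proving.

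\textbf{The false step.} In $\mathbb{Z}[\alpha]$ one has exactly $\alpha^{7}=1+2\alpha^{4}$, so your $\eta$ is $\alpha^{4}$, whose reduction lies outside $\mathbb{F}_{2}$. Hence $\alpha^{14}=1+4\alpha^{4}(1+\alpha^{4})$ with $\alpha^{4}(1+\alpha^{4})$ a $2$-adic unit, and $\nu_{2}(\log\alpha^{7})=1$. The $2$-adic exponential converges only where $\nu_{2}\ge 2$, so $\exp(j\log\alpha^{7})$ diverges for odd $j$. In fact $j\mapsto\alpha^{7j}$ is not a power series on $\mathbb{Z}_{2}$ at all: its coefficients would have to be $(\log\alpha^{7})^{k}/k!$, whose valuation is the binary digit sum of $k$ and so does not tend to infinity. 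Consequently Strassmann's theorem and Weierstrass preparation cannot be applied on a full class modulo $7$. You only have analyticity on cosets of $2\mathbb{Z}_{2}$, i.e.\ on classes of $m$ modulo $14$.

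This restriction is consistent with the statement: the zeros $-7,-11,-1,-29$ lie in $m\equiv 7,3,13,13\pmod{14}$, and the classes $m\equiv 0,10,6\pmod{14}$ have constant valuation $2,1,1$. But it means the zero counts and leading constants you announce still have to be computed in the right setting. A ``simple zero'' by itself only controls $\nu_{2}$ near the zero, not on the whole class.

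\textbf{How to supply the missing computation.} It is short if you skip analysis altogether. From $\alpha^{7}=1+2\alpha^{4}$ and the Binet form, for every integer $j_{0}$ and every $t\ge 0$,
\[
R_{7(j_{0}+t)+r}=\sum_{k=0}^{t}\binom{t}{k}2^{k}R_{7j_{0}+r+4k},
\qquad
\nu_{2}\Bigl(2^{k}\binom{t}{k}\Bigr)\ge\max\{k,\;k-\nu_{2}(k)+\nu_{2}(t)\}.
\]
This is an identity in $\mathbb{Z}$, with negative indices given by the backward recurrence. Now expand at each zero.

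For $r=0$, take $j_{0}=-1$ and $t=j+1$. Since $R_{-7}=-1$, $R_{-3}=2$ and $R_{1}=0$,
\[
R_{m}+1=4t+0\cdot\binom{t}{2}+40\binom{t}{3}+192\binom{t}{4}+\cdots,
\]
so $\nu_{2}(R_{m}+1)=\nu_{2}(t)+2$.

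For $r=3$, take $j_{0}=-2$ and $t=j+2$. Then
\[
R_{m}+1=-2t+8\binom{t}{2}+0\cdot\binom{t}{3}+80\binom{t}{4}+\cdots,
\]
so $\nu_{2}(R_{m}+1)=\nu_{2}(t)+1$.

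For $r=5$, $R_{m}+1\equiv 6+24j\equiv 2\pmod 4$.

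For $r=6$ both zeros are needed. At $j_{0}=-1$ one gets $R_{m}+1\equiv 6t\pmod 4$ for odd $t$, and $R_{m}+1\equiv 14t(t+4)\pmod{2^{\nu_{2}(t)+4}}$ for even $t$. This settles every case except $\nu_{2}(t)=2$. At $j_{0}=-5$, with $s=j+5$, one gets $R_{m}+1\equiv 2s(23s-44)\pmod{2^{\nu_{2}(s)+4}}$ whenever $\nu_{2}(s)\ge 3$. This settles the last case and gives $\nu_{2}(s)+3=\nu_{2}((m+1)(m+29))+1$.

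Supplying this, or the equivalent Weierstrass-degree computation on classes modulo $14$, is what turns your outline into a proof.
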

The $2$-adic order of the shifted Perrin sequence $\{R_{m}-1\}_{m\ge 0}$ for all $m\not\equiv 5,10\pmod{14}$, which we present below, can be found in Batte, Bravo, and Luca \cite[Proposition 2.14]{6}.
\begin{theorem}\label{TV3}
For any $m\ge 0$ such that $m\not\equiv 5,10\pmod{14}$, we have
$$
\nu_{2}(R_{m}-1)= 
\begin{cases}
0, & \mbox{if } m\equiv 1,2,4\pmod{7};\\
1, & \mbox{if } m\equiv 0,3,7,13\pmod{14};\\
2, & \mbox{if } m\equiv 6\pmod{14};\\
\nu_{2}(m+2)+1, & \mbox{if } m\equiv 12\pmod{14}.
\end{cases}
$$
\end{theorem}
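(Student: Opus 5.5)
The plan is to compute $R_m \bmod 2^k$ for increasing $k$ using the periodicity of the Perrin sequence modulo powers of $2$. In the single class where the valuation is unbounded, $m\equiv 12\pmod{14}$, I would use a $2$-adic expansion of the companion matrix.

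First, $X^{3}+X+1$ is irreducible over $\mathbb{F}_{2}$. Its roots lie in $\mathbb{F}_{8}^{\ast}$ and so have order $7$, hence $\{R_m\}$ has period $7$ modulo $2$. From $R_0,\dots,R_6=3,0,2,3,2,5,5$ we get $R_m\equiv 1,0,0,1,0,1,1\pmod 2$. Thus $R_m-1$ is odd exactly when $m\equiv 1,2,4\pmod 7$, which gives the first line.

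Next, the period modulo $4$ divides $14$ and the period modulo $8$ divides $28$, since lifting from $2^k$ to $2^{k+1}$ at most doubles the period. Computing $R_0,\dots,R_{27}$ modulo $8$ should give the following.
\begin{itemize}
\item For $m\equiv 0,3,7,13\pmod{14}$, $R_m\equiv 3\pmod 4$, so $\nu_2(R_m-1)=1$.
\item For $m\equiv 6\pmod{14}$, that is $m\equiv 6,20\pmod{28}$, $R_m\equiv 5\pmod 8$, so $\nu_2(R_m-1)=2$.
\item The residues $5,10\pmod{14}$ are excluded by hypothesis, and $12\pmod{14}$ remains.
\end{itemize}
All of this is a finite check.

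For $m\equiv 12\pmod{14}$, extend the recurrence backwards by $R_m=R_{m+3}-R_{m+1}$. This gives $R_{-1}=-1$ and $R_{-2}=1$, so $m=-2$ is a zero of $R_m-1$. This matches the form $\nu_2(m+2)+1$. Write $m=-2+14s$ with $s\ge 1$, so that $\nu_2(m+2)=\nu_2(s)+1$. It then suffices to prove
\[
\nu_2(R_{-2+14s}-1)=\nu_2(s)+2 .
\]
Let $A$ be the companion matrix of the recurrence, so that $A^{14}\equiv I\pmod 4$. Write $A^{14}=I+4B$ with $B$ an integer matrix. Then
\[
(R_{-2+14s},R_{-1+14s},R_{14s})^{T}=(I+4B)^{s}v, \qquad v=(R_{-2},R_{-1},R_0)^{T},
\]
and by the binomial theorem
\[
R_{-2+14s}-1=\sum_{j\ge1}\binom{s}{j}4^{j}\,e_1^{T}B^{j}v .
\]

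The key step is to show that $e_1^{T}Bv$ is odd, by a direct computation of $A^{14}v$ modulo $8$. Once that holds, the $j=1$ term has valuation exactly $\nu_2(s)+2$. For $j\ge 2$, $\nu_2\bigl(\binom{s}{j}4^{j}\bigr)\ge \nu_2(s)-\nu_2(j)+2j>\nu_2(s)+2$, so those terms are strictly smaller $2$-adically and the claim follows. If instead $A^{14}$ is only $\equiv I\pmod 2$, the same argument with $A^{14}=I+2B$ should still go through, provided the linear coefficient has the exact valuation making the total $\nu_2(s)+2$. The genuine obstacle is therefore this one explicit verification: that the linear coefficient has exactly the right $2$-adic valuation, neither vanishing to higher order nor being too small. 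I would settle it by computing $R_{12}-1=28$ and $R_{26}-1$, which should have valuations $2$ and $3$ respectively, and fitting them to the expansion.
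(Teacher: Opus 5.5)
Your proof is correct, but it cannot follow the paper's route, because the paper has none: it imports this statement from Batte, Bravo, and Luca \cite[Proposition 2.14]{6}.

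Your treatment of the bounded classes is the same kind of periodicity check the paper uses for its own $7$-adic Lemmas~\ref{L2}--\ref{L5}. The residues you predict do hold: $R_0,R_3,R_7,R_{13}=3,3,7,39$ are all $\equiv 3\pmod 4$, and $R_6=5$, $R_{20}=277$ are both $\equiv 5\pmod 8$.

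Periodicity alone cannot reach the unbounded class $m\equiv 12\pmod{14}$. Your expansion of $(I+4B)^s$ about the backward zero $R_{-2}=1$ does reach it. It also explains where the shape $\nu_2(m+2)+1$ comes from, which the citation leaves opaque.

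Two tidy-ups:
\begin{enumerate}
\item The hedge about $A^{14}$ being only $\equiv I\pmod 2$ is unnecessary. Since $X^{3}+X+1$ divides $X^{7}-1$ in $\mathbb{F}_2[X]$, Cayley--Hamilton gives $A^{7}=I+2C$, hence $A^{14}=I+4(C+C^{2})$. This is also the precise form of your ``period at most doubles'' remark, which should be stated for the matrix order rather than the sequence period.
\item The ``genuine obstacle'' is settled by one value. With state vector $(R_m,R_{m+1},R_{m+2})^{T}$, the first row of $A^{14}$ is $(9,16,12)$, so $e_1^{T}A^{14}v=9-16+36=R_{12}=29=1+4\cdot 7$. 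Hence $e_1^{T}Bv=7$ is odd. The value $R_{26}-1=1496=8\cdot 187$ is only a consistency check.
\end{enumerate}

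The citation buys brevity; your route buys a self-contained proof that can be checked by hand.
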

For classes $m\equiv 5,10\pmod{14}$, we determine the $7$-adic valuation of the shifted Perrin sequences $\{R_{m}\pm 1\}_{m\ge 0}$ in the following two results.
\begin{lemma}\label{L6}
If $m\equiv 5\pmod{14}$ or $m\equiv 10\pmod{14}$, then
$$
\nu_{7}(R_{m}-1)= 
\begin{cases}
0, & \mbox{if } m\not \equiv 94,108,173,178,276,299\pmod{336};\\
1, & \mbox{otherwise. } 
\end{cases}
$$
\end{lemma}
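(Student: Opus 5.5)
I would follow the same strategy as in the proofs of Lemmas \ref{L2} and \ref{L1}: reduce everything to finitely many residue computations by means of the periods of the Perrin sequence modulo $7$ and modulo $49$. Since $\{R_m\}_{m\ge 0}$ has the same companion polynomial $X^{3}-X-1$ as the Padovan sequence, the factorization $(X-5)(X^{2}+5X+3)$ over $\mathbb{F}_{7}$ from the proof of Lemma \ref{L2} applies verbatim. Hence $R_m$ modulo $7$ is periodic with period dividing $48$. For modulus $49$, the standard lifting argument shows the period divides $7\cdot 48=336$, exactly as for the Padovan sequence. I would confirm that the period is exactly $336$ by checking that $(R_{336},R_{337},R_{338})\equiv(3,0,2)\pmod{49}$. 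This is a finite verification and is in any case not needed in full, since a period dividing $336$ suffices.

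The key observation is that $\mathrm{lcm}(14,48)=336$. The hypothesis $m\equiv 5,10\pmod{14}$ therefore splits into $2\cdot 24=48$ residue classes modulo $336$: those $r\in\{0,\dots,335\}$ with $r\equiv 5$ or $10\pmod{14}$.

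\textbf{Step 1.} For each such $r$, I reduce it modulo $48$ and compute $R_{r}\bmod 7$ from one period of the Perrin sequence modulo $7$. I expect that $R_r\not\equiv 1\pmod 7$ precisely for the classes other than the six classes $94,108,173,178,276,299$. This gives $\nu_7(R_m-1)=0$ for all $m$ in those other classes. Note that $94,108,178$ are $\equiv 10\pmod{14}$ and $173,276,299$ are $\equiv 5,10,5\pmod{14}$, which is consistent with the hypothesis. Modulo $48$ they reduce to $46,12,34,29,36,11$, so in effect I need $R_j\equiv 1\pmod 7$ for $j\in\{11,12,29,34,36,46\}$ and $R_j\not\equiv 1$ for the remaining residues mod $48$ that occur.

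\textbf{Step 2.} For each of the six exceptional classes $r$, periodicity modulo $49$ gives $R_m\equiv R_r\pmod{49}$. It then suffices to check that $R_r-1$ is divisible by $7$ but not by $49$; for example, $R_{94}$ is computed modulo $49$ by iterating the recurrence. This yields $\nu_7(R_m-1)=1$.

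The main obstacle is purely computational: tabulating $R_j\bmod 7$ for $0\le j<48$ and $R_r\bmod 49$ for the six exceptional $r$. The only conceptual subtlety is justifying the period $336$ modulo $49$. For that, I would argue from the Padovan case already used in Lemma \ref{L1}: any sequence satisfying the same recurrence modulo $49$ has period dividing that of the matrix $\begin{pmatrix}0&1&0\\0&0&1\\1&1&0\end{pmatrix}$ modulo $49$, and this is $336$ because the Padovan sequence generates the full module.
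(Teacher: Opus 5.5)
Your proposal is correct and is essentially the paper's proof. The paper writes $m=14k+5$ or $m=14\ell+10$ and uses the period $48$ modulo $7$ to find that $R_m\equiv 1\pmod 7$ exactly when $k\equiv 12,21\pmod{24}$ or $\ell\equiv 6,7,12,19\pmod{24}$; these are your six classes modulo $336$, and indeed $R_j\equiv 1\pmod 7$ for $0\le j<48$ exactly when $j\in\{11,12,29,34,36,46\}$. It then uses periodicity modulo $49$ with $R_{94}\equiv R_{173}\equiv R_{276}\equiv 15$, $R_{108}\equiv 8$, $R_{178}\equiv 36$, $R_{299}\equiv 22\pmod{49}$.

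Your remark that a period dividing $336$ already suffices is a small gain in rigor over the paper, which only asserts the exact period. That weaker fact follows from the companion matrix $M$ satisfying $M^{48}\equiv I\pmod 7$, hence $M^{336}\equiv I\pmod{49}$.
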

\begin{proof}
Suppose that $m\equiv 5\pmod{14}$. Then, $m=14k+5$ for some integer $k\ge 0$. Therefore, $m\equiv 14k+5\pmod{48}$. Since the companion polynomial of the Perrin sequence is the same as that of the Padovan sequence, it follows that the Perrin sequence also has a period of $48$ modulo $7$. Therefore,
$$
R_{m}\equiv R_{14k+5}\pmod{7}.
$$
It can be shown that $R_{14k+5}\equiv 1\pmod{7}$ if $k\equiv 12,21\pmod{24}$. Otherwise, $R_{14k+5}\not \equiv 1\pmod{7}$. Therefore, $\nu_{7}(R_{m}-1)=0$ if $m\not \equiv 173,299\pmod{336}$. 

If $k\equiv 12,21\pmod{24}$, then $m\equiv 173,299\pmod{336}$. But $R_{173}\equiv 15\pmod{49}$, $R_{299}\equiv 22\pmod{49}$, and the period of the Perrin sequence modulo $49$ is $336$, so
$$
R_{m}\equiv 15,22\pmod{49}.
$$
Hence, $R_{m}-1\equiv 14,21\pmod{49}$. Thus, $\nu_{7}(R_{m}-1)=1$ if $m\equiv 173,299\pmod{336}$.

Suppose now that $m\equiv 10\pmod{14}$. Let $\ell \ge 0$ be an integer such that $m=14\ell +10$. Therefore, $m\equiv 14\ell +10\pmod{48}$. This implies that
$$
R_{m}\equiv R_{14\ell +10}\pmod{7},
$$
since $\{R_{m}\}$ has a period of $48$ modulo $7$. It can be seen that $R_{14\ell +10}\equiv 1\pmod{7}$ if $\ell \equiv 6,7,12,19\pmod{24}$. Otherwise, $R_{14\ell +10}\not \equiv 1\pmod{7}$. Therefore, $\nu_{7}(R_{m}-1)=0$ if $m\not \equiv 94,108,178,276\pmod{336}$. 

If $\ell \equiv 6,7,12,19\pmod{24}$, then $m\equiv 94,108,178,276\pmod{336}$. Since $R_{94}\equiv R_{276}\equiv 15\pmod{49}$, $R_{108}\equiv 8\pmod{49}$, $R_{178}\equiv 36\pmod{49}$, and the period of the Perrin sequence modulo $49$ is $336$, it follows that
$$
R_{m}\equiv 8,15,36\pmod{49}.
$$
Therefore, $R_{m}-1\equiv 7,14,35\pmod{49}$, which means that $\nu_{7}(R_{m}-1)=1$ if $m\equiv 94,108,178,276\pmod{336}$.
\end{proof}
\begin{lemma}\label{L5}
If $m\equiv 5\pmod{14}$ or $m\equiv 10\pmod{14}$, then
$$
\nu_{7}(R_{m}+1)= 
\begin{cases}
0, & \mbox{if } m\not \equiv 19,24,38,47,89,122,229,304\pmod{336};\\
1, & \mbox{if } m\equiv 19,24,38,89,122,229,304\pmod{336};\\
2, & \mbox{if } m\equiv 47\pmod{336}.
\end{cases}
$$
\end{lemma}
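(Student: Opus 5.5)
We will follow the same template as in the proof of Lemma \ref{L6}, now applied to $R_{m}+1$. The facts we rely on are these. The Perrin sequence has the same companion polynomial as the Padovan sequence, so it has period $48$ modulo $7$, as established in the proof of Lemma \ref{L2}. Its period modulo $49$ is $336=7\cdot 48$. To handle the one case of valuation $2$, we will also need its period modulo $343$, which we expect to be $7\cdot 336=2352$.

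\textbf{Step 1: the class $m\equiv 5\pmod{14}$.} Write $m=14k+5$. Since $\operatorname{lcm}(14,48)=336$, the residue of $m$ modulo $48$ depends only on $k\bmod 24$. We compute $R_{14k+5}\bmod 7$ for $k=0,\dots,23$ and record the $k$ with $R_{14k+5}\equiv -1\pmod 7$. Translating these $k$ back to residues of $m$ modulo $336$ should give exactly the classes $19,47,89,229$, all of which are $\equiv 5\pmod{14}$. For every other $k$ we get $\nu_{7}(R_{m}+1)=0$.

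\textbf{Step 2: the class $m\equiv 10\pmod{14}$.} Write $m=14\ell+10$ and repeat Step 1. This should isolate the classes $24,38,122,304$ modulo $336$.

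\textbf{Step 3: lifting to modulus $49$.} For each surviving class $r$, periodicity modulo $49$ gives $R_{m}\equiv R_{r}\pmod{49}$. We compute $R_{r}\bmod 49$ directly. For $r\in\{19,24,38,89,122,229,304\}$ we expect $R_{r}+1\equiv 7c\pmod{49}$ with $7\nmid c$, so $\nu_{7}(R_{m}+1)=1$. For $r=47$ we expect $R_{47}\equiv -1\pmod{49}$, so $\nu_{7}(R_{m}+1)\ge 2$.

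\textbf{Main obstacle: showing $\nu_{7}(R_{m}+1)=2$ exactly on the class $m\equiv 47\pmod{336}$.} This needs an upper bound on the valuation that holds for every $m$ in an infinite class, not just for $m=47$. We would try the following, in order.

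1. \emph{Uniform residue modulo $343$.} Check whether $R_{47+336j}\bmod 343$ is the same for $j=0,\dots,6$. If the period modulo $343$ is $2352$, then $336$ steps is one seventh of that period, and these seven values need not agree.
2. \emph{If they agree,} we are done: each $R_{47+336j}+1$ is $\equiv 49c\pmod{343}$ with $7\nmid c$, so the valuation is exactly $2$.
3. \emph{If they do not agree,} the statement as written would be false for some $j$. In that case we would instead prove the valuation grows $7$-adically like $\nu_{7}(m-47)$ plus a constant. To do this, write $R_{m}=\alpha^{m}+\beta^{m}+\gamma^{m}$ and work in the $7$-adic completion. The root $\alpha\equiv 5$ lies in $\mathbb{Z}_{7}$, while $\beta$ and $\gamma$ generate the unramified quadratic extension. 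Then expand $\theta^{336t}=\exp\bigl(t\log\theta^{336}\bigr)$ for each root $\theta$.

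For the application to Theorem~\ref{T2}, either outcome is enough, since a logarithmic bound on $\nu_{7}(R_{m}+1)$ suffices. The first thing we will do is the direct computation modulo $343$ for $j=0,\dots,6$.
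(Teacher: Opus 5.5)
Your Steps 1--3 are the paper's proof: period $48$ modulo $7$ isolates the eight classes, and period $336$ modulo $49$ gives valuation $1$ on seven of them and valuation at least $2$ on $m\equiv 47\pmod{336}$. The classes and residues come out as you predict. What your proposal leaves open is the outcome of the modulo-$343$ test.

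Your caution there is justified. The paper simply asserts that $R_m+1\equiv 147\pmod{343}$ on the whole class, and this does not follow from periodicity, because the period modulo $343$ really is $2352$. For instance, $R_{381}\equiv R_{45}-49\pmod{343}$.

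The check nevertheless succeeds, so your branch 3 never arises. For each root $\theta$ of $X^3-X-1$ in the unramified extension of $\mathbb{Q}_7$, write $\theta^{48}=1+7e_\theta$. Then $\theta^{336j}\equiv 1+49je_\theta\pmod{343}$. Summing $\theta^{m}\theta^{336j}$ over the three roots gives
$$
R_{m+336j}\equiv R_m+49j\,T_m\pmod{343},\qquad T_m=\frac{R_{m+48}-R_m}{7}.
$$
So $j\mapsto R_{47+336j}\bmod 343$ is an arithmetic progression. It is constant if and only if $7\mid T_{47}$, that is, if and only if $R_{95}\equiv R_{47}\pmod{49}$.

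Running the recurrence modulo $49$ gives $R_{95}\equiv 48\equiv R_{47}$. Hence, for every $m\equiv 47\pmod{336}$,
$$
R_m+1\equiv R_{47}+1=549290\equiv 147=3\cdot 49\pmod{343},
$$
so $\nu_7(R_m+1)=2$ exactly.

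The same expansion also proves what your seven-value test tacitly needed, namely that $2352$ is a period modulo $343$. It further shows that a single comparison suffices instead of seven.
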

\begin{proof}
The proof of the first two cases is similar to that of Lemma \ref{L6}. For this reason, we omit the details of their proofs. For the case $m\equiv 47\pmod{336}$, note that $R_{47}=549289\equiv 48\pmod{49}$, so $\nu_{7}(R_{m}+1)\ge 2$, since the period of the Perrin sequence modulo $49$ is $336$. To prove that $\nu_{2}(R_{m}+1)$ is exactly $2$, it it suffices to show that $R_{m}+1$ is not divisible by $7^{3}=343$. This holds because $R_{m}+1\equiv 147\pmod{343}$ for all $m\equiv 47\pmod{336}$.
\end{proof}
We now provide an exponential lower bound for $P_{m}$ in terms of the real root $\alpha$ of $P(X)$ and its index $m$ (see Batte, Bravo, and Luca \cite[Lemma 2.3]{6} for a proof.)
\begin{lemma}\label{L4}
For $m\ge 0$, we have
$$
P_{m}\ge \alpha^{m-2}.
$$
\end{lemma}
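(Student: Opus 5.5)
Since the statement only asserts $P_m\ge \alpha^{m-2}$ for all $m\ge 0$, where $\alpha\approx 1.3247$ is the real root of $X^{3}-X-1$, I would prove it by strong induction on $m$, using the recurrence $P_{m+3}=P_{m+1}+P_{m}$ together with the defining identity $\alpha^{3}=\alpha+1$.

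\emph{Base cases.} For $m=0,1,2$ we have $P_m=1$. The right-hand sides are $\alpha^{-2}$, $\alpha^{-1}$ and $\alpha^{0}=1$, and since $\alpha>1$ each is at most $1$. So the inequality holds for the three initial indices.

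\emph{Inductive step.} Let $m\ge 0$ and assume $P_{k}\ge \alpha^{k-2}$ for all $k\le m+2$. Then
$$
P_{m+3}=P_{m+1}+P_{m}\ge \alpha^{m-1}+\alpha^{m-2}=\alpha^{m-2}(\alpha+1)=\alpha^{m-2}\alpha^{3}=\alpha^{m+1},
$$
which is exactly the required bound for index $m+3$. This completes the induction.

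There is no real obstacle here. The only point to check is that the induction uses the two indices $m$ and $m+1$, which lag three steps behind $m+3$. That is why all three initial values $m=0,1,2$ must be verified, and they hold trivially because $P_0=P_1=P_2=1$ and $\alpha>1$. Note also that the identity $\alpha+1=\alpha^{3}$ makes the step an equality, so no slack is lost.
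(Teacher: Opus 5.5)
Your strong induction is correct and complete: the base cases hold since $P_0=P_1=P_2=1\ge \alpha^{m-2}$ for $m\le 2$, and the step $P_{m+3}=P_{m+1}+P_m\ge \alpha^{m-1}+\alpha^{m-2}=\alpha^{m-2}(\alpha+1)=\alpha^{m+1}$ uses $\alpha^{3}=\alpha+1$ exactly as needed. The paper gives no proof of its own here (it cites Lemma 2.3 of Batte, Bravo, and Luca), and your argument is the standard induction for this kind of bound, so nothing is missing.
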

A similar result for Perrin numbers is proven in Bravo and Irmak \cite[Equation (8.3)]{5}.
\begin{lemma}\label{L7}
For $m\ge 2$, we have
$$
R_{m}>\alpha^{m-2}.
$$
\end{lemma}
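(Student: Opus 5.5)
The plan is to prove Lemma \ref{L7} by strong induction on $m$, using only the recurrence $R_{m+3}=R_{m+1}+R_{m}$ and the defining identity $\alpha^{3}=\alpha+1$ of the real root $\alpha\approx 1.3247$ of $P(X)=X^{3}-X-1$. This mirrors the argument behind Lemma \ref{L4} for Padovan numbers. Because the recurrence reaches back three steps, I would verify three consecutive base cases and then propagate the inequality.

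\textbf{Base cases} $m=2,3,4$. We have $R_{2}=2>1=\alpha^{0}$ and $R_{3}=3>\alpha\approx 1.3247$. For $m=4$ we have $R_{4}=2>\alpha^{2}\approx 1.7549$. This is the tightest of the three and the only point needing a numerical check. A clean justification is $\alpha<\sqrt{2}$, which holds because $P(\sqrt{2})=\sqrt{2}-1>0$ and $P$ is increasing for $X>1/\sqrt{3}$. Hence $\alpha^{2}<2$.

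\textbf{Inductive step.} Let $m\ge 2$ and assume $R_{k}>\alpha^{k-2}$ for every $2\le k\le m+2$. Then $m$ and $m+1$ are both at least $2$, so the hypothesis applies to them, and
$$
R_{m+3}=R_{m+1}+R_{m}>\alpha^{m-1}+\alpha^{m-2}=\alpha^{m-2}(\alpha+1)=\alpha^{m-2}\alpha^{3}=\alpha^{m+1}=\alpha^{(m+3)-2}.
$$
The strict inequality is preserved because it is strict in both summands. This completes the induction for all $m\ge 2$.

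The restriction $m\ge 2$ is needed because $R_{1}=0$, so the inequality fails at $m=1$. It is also why the base cases start at $m=2$ rather than $m=0$. The only real obstacle is the base case $m=4$, where the margin is small ($2$ versus about $1.755$). It is handled by the elementary bound $\alpha<\sqrt{2}$ above.
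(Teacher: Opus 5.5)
Your induction is correct: the base cases $R_{2}=2>1$, $R_{3}=3>\alpha$ and $R_{4}=2>\alpha^{2}$ (via $\alpha<\sqrt{2}$) hold, and the step $R_{m+3}=R_{m+1}+R_{m}>\alpha^{m-2}(\alpha+1)=\alpha^{m+1}$ propagates the bound to all $m\ge 2$. The paper gives no proof of its own and only cites \cite[Equation (8.3)]{5}, so your argument, which parallels the Padovan bound of Lemma~\ref{L4}, makes the statement self-contained.
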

We conclude this section on auxiliary results with a lower bound for the $p$-adic order of $n!$. A proof of this result can be found in Bugeaud and Laurent \cite[Lemma 1]{15}.
\begin{lemma}\label{L3}
For any positive integer $n$, we have
$$
\nu_{p}(n!)\ge \frac{n}{p-1}-\log_{p}(n+1).
$$
\end{lemma}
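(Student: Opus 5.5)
The plan is to use Legendre's formula. Write $n=a_{k}p^{k}+\cdots+a_{1}p+a_{0}$ in base $p$, with digits $0\le a_{i}\le p-1$ and $a_{k}\neq 0$. Let $s_{p}(n)$ denote the sum of these digits. Legendre's formula gives the identity
$$
\nu_{p}(n!)=\sum_{i\ge 1}\left\lfloor \frac{n}{p^{i}}\right\rfloor=\frac{n-s_{p}(n)}{p-1}.
$$
I will first recall or prove this identity. Then the claimed inequality reduces to the digit-sum bound
$$
s_{p}(n)\le (p-1)\log_{p}(n+1).
$$

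To prove this identity, I will use the fact that $\lfloor n/p^{i}\rfloor=\sum_{j\ge i}a_{j}p^{j-i}$. Summing over $i\ge 1$ gives
$$
\sum_{j\ge 1}a_{j}(p^{j-1}+\cdots+1)=\sum_{j}a_{j}\,\frac{p^{j}-1}{p-1}=\frac{n-s_{p}(n)}{p-1}.
$$
This part is routine.

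For the digit-sum bound, let $k+1$ be the number of base-$p$ digits of $n$, so that $p^{k}\le n<p^{k+1}$. Each digit is at most $p-1$, but the crude estimate $s_{p}(n)\le (p-1)(k+1)$ is slightly too weak. I will refine it using the leading digit. We have $s_{p}(n)\le a_{k}+(p-1)k$ and $n+1\ge a_{k}p^{k}+1$. It therefore suffices to show
$$
a_{k}+(p-1)k\le (p-1)\log_{p}(a_{k}p^{k}+1)=(p-1)k+(p-1)\log_{p}\!\left(a_{k}+p^{-k}\right).
$$
This reduces to $a_{k}\le (p-1)\log_{p}(a_{k}+p^{-k})$, and for that it is enough to have $a_{k}\le (p-1)\log_{p}a_{k}$ when $a_{k}\ge 2$.

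This last step is the main obstacle, since it fails for small leading digits. For example, $a_{k}=1$ gives $1\le 0$. I therefore expect to need a different estimate. The cleanest fix is to bound with $n+1$ directly: since $s_{p}(n+1)\ge 1$ and the digit sum behaves well under carries, I would try instead to prove
$$
s_{p}(n)\le (p-1)\log_{p}(n+1)
$$
by the concavity argument below, after first checking the inequality for the worst case.

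For $n$ with $k+1$ digits, the digit sum is maximized relative to $n$ when all lower digits equal $p-1$, that is, $n=(a+1)p^{k}-1$. This case gives $s_{p}(n)=a+(p-1)k$ and $n+1=(a+1)p^{k}$. The requirement then becomes $a\le (p-1)\log_{p}(a+1)$ for $1\le a\le p-1$. The function $t\mapsto (p-1)\log_{p}(1+t)-t$ is concave and vanishes at $t=0$ and $t=p-1$, so it is nonnegative on $[0,p-1]$.

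It remains to reduce the general $n$ to this extremal case. For $n$ with leading digit $a$ and $k+1$ digits, we have $s_{p}(n)\le a+(p-1)k$ and $n+1\ge ap^{k}+1$. Rather than comparing with the extremal case directly, I will use the following bound, which holds uniformly. The quantity $(p-1)\log_{p}(n+1)-s_{p}(n)$ increases by at least $(p-1)\log_{p}\frac{m+1}{m}-1$ per unit step along runs without carry. Since this increment can be negative, I will not pursue that route. Instead I will use induction on $k$ via $n=pq+a_{0}$. We have $s_{p}(n)=s_{p}(q)+a_{0}$ and $\log_{p}(n+1)\ge 1+\log_{p}(q+1)+\log_{p}\frac{pq+a_{0}+1}{p(q+1)}$. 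The key step is to show that $a_{0}\le (p-1)\bigl(1+\log_{p}\frac{pq+a_{0}+1}{pq+p}\bigr)$, which I expect to follow from the same concavity inequality applied with $t=a_{0}$. Once that holds, the inductive step closes, and the base case $n<p$ is exactly the concavity inequality.
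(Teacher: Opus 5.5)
Your proof is correct. It also differs from the paper, which gives no argument for this lemma and simply cites Bugeaud and Laurent \cite[Lemma 1]{15}. What you supply is a self-contained replacement for that citation. You reduce, via Legendre's formula $\nu_p(n!)=(n-s_p(n))/(p-1)$, to the digit-sum bound $s_p(n)\le (p-1)\log_p(n+1)$. Your derivation of Legendre's formula and this reduction are both fine.

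The one step you only ``expect'' does hold, but it needs one line beyond concavity at $t=a_0$. Since $0\le a_0\le p-1$,
$$(a_0+1)(q+1)=pq+a_0+1-(p-1-a_0)q\le pq+a_0+1=n+1 .$$
Hence $a_0\le (p-1)\log_p(a_0+1)\le (p-1)\log_p\frac{n+1}{q+1}$, which is exactly your key inequality $a_0\le (p-1)\bigl(1+\log_p\frac{pq+a_0+1}{pq+p}\bigr)$. Adding the induction hypothesis $s_p(q)\le (p-1)\log_p(q+1)$ then gives $s_p(n)\le (p-1)\log_p(n+1)$.

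Unrolled, the induction says $\prod_i(a_i+1)\le n+1$: the integers whose base-$p$ digits are digitwise at most those of $n$ are all $\le n$. Then $a\le (p-1)\log_p(1+a)$ is applied digit by digit. This is a compact way to present the same argument.

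In a final write-up, delete the leading-digit reduction, the ``extremal case'' claim (left unproved), and the step-increment idea. They are dead ends and none is needed. The base case $n<p$ plus the induction step above is the entire proof.

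Comparing the routes: yours makes the lemma elementary and independent of the literature. The paper's citation costs no space but sends the reader to an external source for a fact with a half-page proof.
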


\section{On the Brocard–Ramanujan equation with Padovan numbers}

\noindent In this section, we prove Theorem \ref{T1}. Let $(n,x)$ be a solution of the Brocard-Ramanujan equation with $n\ge 4$ and $x\ge 5$, where $x$ is a Padovan number. Then exists an integer $m\ge 7$ such that $x=P_{m}$. Therefore,
\begin{equation}\label{E1}
n!=(P_{m}+1)(P_{m}-1).
\end{equation}
\textsc{Case 1.} $m\equiv 50\pmod{122}$.

By applying the $7$-adic valuation to both sides of \eqref{E1} and using the fact that $\nu_{p}(r\cdot s) =\nu_{p}(r)+\nu_{p}(s)$ for any $r,s\in \mathbb{Z}$ and $p$ a prime number, we obtain
\begin{equation*}
\nu_{7}(n!)=\nu_{7}(P_{m}+1)+\nu_{7}(P_{m}-1).
\end{equation*}
The left-hand side is greater than or equal to $(n/6)-\log_{7}(n+1)$ by Lemma \ref{L3} with $p=7$, while the right-hand side is less than or equal to $1$ by Lemmas \ref{L2} and \ref{L1}. Therefore,
$$
(n/6)-\log_{7}(n+1)\le 1,
$$
and so
\begin{equation*}
n\le 6\log_{7}(n+1)+6.
\end{equation*}
Since the right-hand side of the above inequality is concave down as a function of real $n > 1$, a routine calculation shows that this inequality requires that
$$
n\le 14.
$$

\noindent \textsc{Case 2.} $m\not \equiv 50\pmod{112}$.

Now we apply the $2$-adic valuation to both sides of \eqref{E1} and again use the homomorphism property of the valuation, which shows that
\begin{equation*}
\nu_{2}(n!)=\nu_{2}(P_{m}+1)+\nu_{2}(P_{m}-1).
\end{equation*}
By Lemma \ref{L3} with $p=2$, we obtain that $\nu_{2}(n!)\ge n-\log_{2}(n+1)$. Therefore,
\begin{equation}\label{E6}
n\le \log_{2}(n+1)+\nu_{2}(P_{m}+1)+\nu_{2}(P_{m}-1).
\end{equation}
From Theorem \ref{P1} and the fact that $\nu_{2}(r)\le \log_{2}(r)$ for $r\in \mathbb{Z}^{+}$, it follows that 
\begin{align}\label{E4}
\nonumber
\nu_{2}(P_{m}+1)&\le \max\{\nu_{2}(m+9)+1,\nu_{2}(m+6)+4,5\}\\
&\le \nu_{2}(m+6)+4\le \log_{2}(m+6)+4.
\end{align}
Meanwhile, Theorem \ref{TV1} implies that 
\begin{align}\label{E5}
\nonumber
\nu_{2}(P_{m}-1)&\le \max \{\nu_{2}(m+2)+1,\nu_{2}((m-1)(m+13))+1,\nu_{2}(m+7)+1,\\\nonumber
&\nu_{2}(m)+1,\nu_{2}(m+5)+2,\nu_{2}((m-2)(m+26))+3,\nu_{2}(m+12)+4\}\\
&\le \nu_{2}((m-2)(m+26))+3\le \log_{2}((m-2)(m+26))+3.
\end{align}
Therefore, from \eqref{E6}, \eqref{E4}, and \eqref{E5}, we conclude that
\begin{equation}\label{E8}
n\le \log_{2}((n+1)(m+6)(m-2)(m+26))+7.
\end{equation}
On the other hand, by the definition of the factorial, we have $n!<n^{n}$ for all $n\ge 2$. Hence, $n!+1\le n^{n}$ for all $n\ge 2$. Since $P_{m}^{2}\ge \alpha^{2m-4}$ by Lemma \ref{L4}, it follows that 
$$
\alpha^{2m-4}\le P_{m}^{2}=n!+1\le n^{n},
$$
and therefore
\begin{equation}\label{E7}
m\le 2n\log_{2}(n)+2.
\end{equation}
From \eqref{E8} and \eqref{E7} we obtain that
$$
n\le \log_{2}\left((n+1)(n\log_{2}(n)+4)(n\log_{2}(n))(n\log_{2}(n)+14)\right)+10.
$$
Since the right-hand side of the above inequality is concave down as a function of real $n>1$, a routine calculation shows that
$$
n\le 38.
$$
Therefore, in any case, $n\le 38$. In 1935, Gupta \cite{12} stated that calculations of $n!$ up to $n=63$ yielded no solutions to the Brocard–Ramanujan equation other than $(n,x)\in \{(4,5),(5,11),(7,71)\}$. The proof of Theorem \ref{T1} concludes by noting that $x=5=P_{7}$ is a Padovan number, whereas $x=11$ and $x=71$ are not.

\section{Perrin sequence and the Brocard–Ramanujan equation}

\noindent We conclude by proving Theorem \ref{T2}.  Let $(n,x)$ be a pair of Brown numbers with $n\ge 4$ and $x\ge 5$, where $x$ is a Perrin number. Then exists an integer $m\ge 5$ such that $x=R_{m}$. Therefore, 
\begin{equation}\label{E2}
n!=(R_{m}+1)(R_{m}-1).
\end{equation}

\noindent \textsc{Case 1.} $m\equiv 5\pmod{14}$ or $m\equiv 10\pmod{14}$.

Here we apply the $7$-adic valuation to \eqref{E2} and use the fact that $\nu_{p}(r\cdot s)=\nu_{p}(r)+\nu_{p}(s)$ for any $r,s\in \mathbb{Z}$ and $p$ a prime number. We then apply Lemma \ref{L3} (with $p=7$), Lemma \ref{L5}, and Lemma \ref{L6} to the resulting equality. This gives us
$$
(n/6)-\log_{7}(n+1)\le \nu_{7}(n!)=\nu_{7}(R_{m}+1)+\nu_{7}(R_{m}-1)\le 2+1.
$$
Therefore,
$$
n\le 6\log_{7}(n+1)+18.
$$
Since the right-hand side of the above inequality is concave down as a function of real $n > 1$, a routine calculation shows that this inequality implies that
$$
n\le 28.
$$

\noindent \textsc{Case 2.} $m\not \equiv 5,10\pmod{14}$.

Taking the $2$-adic valuation in \eqref{E2} and using the fact that the $p$-adic valuation of a product is equal to the sum of the $p$-adic valuations of the factors, we obtain
$$
\nu_{2}(n!)=\nu_{2}(R_{m}+1)+\nu_{2}(R_{m}-1).
$$
We know that $\nu_{2}(n!)\ge n-\log_{2}(n+1)$ by Lemma \ref{L3} with $p=2$. Meanwhile, $\nu_{2}(R_{m}+1)\le \nu_{2}((m+1)(m+29))+1$ and $\nu_{2}(R_{m}-1)\le \nu_{2}(m+2)+1$ by Theorems \ref{TV2} and \ref{TV3}, respectively. Therefore,
$$
n\le \log_{2}(n+1)+\nu_{2}((m+1)(m+29))+\nu_{2}(m+2)+2.
$$
Since $\nu_{2}(r)\le \log_{2}(r)$ for any positive integer $r$, we must have
\begin{equation}\label{E11}
n\le \log_{2}(4(n+1)(m+1)(m+29)(m+2)).
\end{equation}
Furthermore, it follows from Lemma \ref{L7} that $R_{m}>\alpha^{2m-4}$. Since $n!+1\le n^{n}$ for all $n\ge 2$, we obtain
$$
\alpha^{2m-4}<R_{m}^{2}=n!+1\le n^{n},
$$
and therefore
\begin{equation}\label{E10}
m<2n\log_{2}(n)+2.
\end{equation}
From \eqref{E11} and \eqref{E10}, we can conclude that
\begin{equation*}
n<\log_{2}\left(8(n+1)(2n\log_{2}(n)+3)(2n\log_{2}(n)+31)(n\log_{2}(n)+2)\right).
\end{equation*}
Since the right-hand side of the above inequality is concave down as a function of real $n>1$, a routine calculation shows that
$$
n\le 32.
$$
Therefore, in any case, $n\le 32$. Since $n<63$, the result of Gupta \cite{12} guarantees that the only solutions to the Brocard-Ramanujan equation are the three known Brown numbers. However, $x=11$ and $x=71$ are not Perrin numbers, whereas $x=5=R_{5}=R_{6}$ is. This concludes the proof of Theorem \ref{T2}.

\begin{remark}
It follows from Lemma \ref{L4} that $m<7\log P_{m}$ for all $m\ge 3$; therefore, $m = \mathcal{O}(\log P_{m})$. On the other hand, for $m\not \equiv 50\pmod{112}$, it follows from \eqref{E4} and \eqref{E5}, respectively, that
$$
\nu_{2}(P_{m}+1)\le \log_{2}(m+6)+4<5m^{1/3},
$$
and
$$
\nu_{2}(P_{m}-1)\le \log_{2}((m-2)(m+26))+3<5.5m^{1/3}
$$
hold for all $m\ge 3$. Thus, $\nu_{2}(P_{m}+1)=\mathcal{O}(m^{1/3})$ and $\nu_{2}(P_{m}-1)=\mathcal{O}(m^{1/3})$. It follows from Bravo et al. \cite[Theorem 1]{2} that equation \eqref{E3} has a finite number of solutions when $x=P_{m}$ is a Padovan number and $m\not \equiv 50\pmod{112}$. In the proof of Bravo et al. \cite[Theorem 1]{2}, we can take the parameters $(p,n_{0},K,K_{3},C)=(2,6,5.5,7,1/3)$, and so
$$
\hat{K}=2/(K_{3}(2Kp)^{1/C})=0.0000268327\ldots.
$$
By using this, from Bravo et al. \cite[Equation (10)]{2} we obtain the inequality
$$
n^{2}<37268\log (n/2)
$$
implying that $n\le 449$. In the class $m\equiv 50\pmod{112}$, the $2$-adic valuation of $\{P_{m}+1\}_{m\ge 0}$ does not grow logarithmically in $m$, and there is no constant $C_{1}$ such that $\nu_{2}(P_{m}+1)=\mathcal{O}(m^{1/C_{1}})$. Therefore, it is not possible to apply Theorem 1 of Bravo et al. \cite{2} with $p=2$ to conclude that there are finitely many solutions to equation \eqref{E3} when $x=P_{m}$ is a Padovan number and $m\equiv 50\pmod{112}$.

If $p=7$ and $m\equiv 50\pmod{112}$, then according to Lemmas \ref{L2} and \ref{L1}, we cannot apply Theorem 1 of Bravo et al. \cite{2} either, since $\nu_{7}(P_{m}+1)=\mathcal{O}(1)$ and $\nu_{7}(P_{m}-1) =\mathcal{O}(1)$, which implies that $C=0$.
\end{remark}

\begin{remark}
Nor is it possible to use Theorem 1 of Bravo et al. \cite{2} with $p=2$ and $p=7$ to conclude that there are finitely many solutions to the Brocard-Ramanujan equation when $x=R_{m}$ is a Perrin number and $m\equiv 5,10\pmod{14}$.
\end{remark}




\begin{thebibliography}{99}

\bibitem{6} Batte, H., Bravo, E. F., Luca, F.: Cullen and Woodall numbers in Padovan and Perrin sequences. arXiv:2605.23084 [math.NT]. (2026). 
https://doi.org/10.48550/arXiv.2605.23084

\bibitem{9} Berndt, B. C., Galway, W. F.: On the Brocard-Ramanujan equation $n!+1=m^{2}$. Ramanujan J. {\bf{4}} (2), 41-42 (2000).

\bibitem{5} Bravo, E. F., Irmak, N.: The 2-adic valuation of shifted Padovan and Perrin numbers and applications. Turkish J. Math. {\bf{48}} (6), 1183-1196 (2024) https://www.doi.org/10.55730/1300-0098.3568

\bibitem{2} Bravo J. J., Diaz, M., Ramirez, J. L.: On a variant of the Brocard-Ramanujan equation and an application. Publ. Math. Debrecen {\bf{98}} (1-2), 243-253 (2021) https://www.doi.org/10.5486/PMD.2021.8871

\bibitem{8} Brocard, H.: Question 166.  Nouv. Corres. Math. {\bf 2} 287-287, (1876).

\bibitem{15} Bugeaud, Y., Laurent, M.: Minoration effective de la distance p-adique entre puissances de nombres algébriques. J. Number Theory {\bf{61}}, 311–342 (1996).

\bibitem{17} Carmichael, R. D.: On the numerical factors of the arithmetic forms $\alpha^{n}\pm \beta^{n}$. Ann. of Math. {\bf{15}}, 30-70 (1913).

\bibitem{13} Erd\H{o}s, P., Graham, R.: Old and New Problems and Results in Combinatorial Number Theory. Vol. 28 of Monographies de L'Enseignement Mathématique. Geneva: Université de Genève, 1980, p. 97.

\bibitem{7} Facó, V., Marques, D.: Tribonacci Numbers and the Brocard-Ramanujan Equation. J. Integer Seq. {\bf{19}}, A 16.4.4 (2016).

\bibitem{12} Gupta, H.: On a Brocard-Ramanujan Problem. Math. Student {\bf{3}} 71--71, (1935).

\bibitem{3} Ismail, M., Rihane, S. E., Anwar, M.: Narayana sequence and the Brocard-Ramanujan equation. Notes on Number Theory and Discrete Mathematics {\bf{29}} (3), 462-473 (2023) https://www.doi.org/10.7546/nntdm.2023.29.3.462-473

\bibitem{4} Marques, D.: Fibonacci numbers at most one away from a product of factorials. Notes on Number Theory and Discrete Mathematics {\bf{18}} (3), 13-19 (2012).

\bibitem{14} Overholt, M.: The Diophantine Equation $n! + 1 = m^{2}$. Bull. Lond. Math. Soc. {\bf{25}} (2): 104--104, (1993). https://www.doi.org/10.1112/blms/25.2.104

\bibitem{11} Pickover, C. A.: Keys to Infinity, John Wiley \& Sons, p. 170 (1995).

\bibitem{10} Ramanujan, S.: Collected Papers of Srinivasa Ramanujan (Ed. G. H. Hardy, P. V. S. Aiyar, and B. M. Wilson). Providence, RI: Amer. Math. Soc., p. 327, 2000.

\bibitem{16} Young, P.T.: 2-adic properties of generalized Fibonacci numbers. Integers {\bf{20}}, A71 (2020). https://www.doi.org/10.5281/zenodo.10792544

\bibitem{1} Young, P.T.: On the Brocard-Ramanujan equation with Tribonacci and Tetranacci numbers. Integers {\bf{24}}, A92 (2024). https://www.doi.org/10.5281/zenodo.13992577

\end{thebibliography}
\end{document}